\documentclass[11pt]{article}

\usepackage[T1]{fontenc}
\usepackage{lmodern}
\usepackage[margin=1in]{geometry}
\usepackage{amsmath,amssymb,amsthm}
\usepackage{booktabs}
\usepackage{longtable}
\usepackage{microtype}
\usepackage[hidelinks]{hyperref}
\hypersetup{
  pdftitle={Shuffle squares in smooth binary words and the Kolakoski sequence},
  pdfauthor={Kory Kilpatrick},
  pdfsubject={Shuffle squares, smooth words, and the Kolakoski sequence},
  pdfkeywords={shuffle square, smooth word, Kolakoski sequence, combinatorics on words}
}

\newtheorem{theorem}{Theorem}[section]
\newtheorem{lemma}[theorem]{Lemma}
\newtheorem{corollary}[theorem]{Corollary}
\newtheorem{proposition}[theorem]{Proposition}
\theoremstyle{definition}

\theoremstyle{remark}
\newtheorem{remark}[theorem]{Remark}

\newcommand{\eps}{\varepsilon}

\newcommand{\K}{\mathcal K}
\newcommand{\A}{\mathcal A}

\title{Shuffle squares in smooth binary words\\
and the Kolakoski sequence}
\author{Kory Kilpatrick}
\date{July 2026; revised 28 September 2026}

\begin{document}

\maketitle

\begin{abstract}
A binary word is a shuffle square if its positions can be partitioned
into two identical subsequences.  A finite word over $\{1,2\}$ is smooth
if repeated finite run-length differentiation remains binary until the
empty word is reached.  This paper proves that every smooth binary word in which
both letters have even multiplicity is a shuffle square once its length
is at least $18$.  There are exactly $34$ exceptions, all of length at
most $16$.  The infinite part of the proof is a parity-return atom
induction: a smooth even-Parikh word factors uniquely into smooth parity
atoms, and every such atom has length at most $8$.  A finite,
independently checkable certificate handles lengths $18$ through $40$.
As a consequence, a nonempty prefix of the classical Kolakoski sequence
is a shuffle square if and only if both symbols occur an even number of
times and its length is not $4$ or $8$.  There are infinitely many such
prefixes.  In fact, the entire infinite Kolakoski sequence can be
partitioned into two identical infinite subsequences with monochromatic
blocks of uniformly bounded length.
\end{abstract}

\section*{Research contributions and AI disclosure}

Kory Kilpatrick supervised this project. Apart from the external
contributions credited in the paper, GPT-5.6 Sol and GPT-6-Astra
(OpenAI generative-AI systems) performed the mathematical research and
proof development, wrote and executed the computational code, and
drafted and revised the manuscript. Kilpatrick's contribution was
supervision; he does not claim to have personally derived the proofs or
written the code or text. Verification included exact computational
checks and separate AI-agent reviews; these do not constitute independent
human peer review or proof-assistant formalization. As the submitting
author, Kilpatrick takes responsibility for the content.

\section{Introduction}

Let $w=w_1\cdots w_{2m}$ be a finite word.  We call $w$ a
\emph{shuffle square} if its positions can be partitioned into two
increasing supports
\[
 i_1<\cdots<i_m,\qquad j_1<\cdots<j_m
\]
such that $w_{i_t}=w_{j_t}$ for every $t$.  Equivalently, $w$ is an
interleaving of two copies of a common root.  Shuffle squares were
introduced by Henshall, Rampersad, and Shallit~\cite{HRS}.  Their
recognition is NP-complete even over a binary alphabet~\cite{Bulteau}.

A necessary condition is that every letter occur an even number of
times.  Following recent terminology~\cite{GPR}, we call such a word a
\emph{tangram}.  He, Huang, Nam, and Thaper~\cite{HHNT} studied the enumeration of
shuffle squares and conjectured that asymptotically almost every binary
tangram is a shuffle square. He and Post~\cite{HePost} proved this
conjecture.  Here an exact finite-obstruction theorem is established inside the highly structured
factorial language of smooth words.

Grytczuk, Pawlik, and Ruci\'{n}ski asked whether even one
prefix of the classical Kolakoski sequence is a shuffle
square~\cite[Question~1]{GPR}.  The length-$16$ prefix
\[
 1221121221221121
\]
is one.  The result gives the stronger exact statement that, apart from
two short obstructions of lengths $4$ and $8$, Parikh parity is the only
obstruction for a Kolakoski prefix.

The proof proceeds through a result about the full language of finite smooth
words.  Smooth words, also called finite $C^\infty$-words, were
introduced to study the factors of Kolakoski; see
\cite{BBC,CassaigneHenry,FedouFici,Weakley}.  The language is factorial and
is currently a more tractable envelope for the still-mysterious factor
language of Kolakoski.

\begin{theorem}\label{thm:main}
Every smooth binary tangram of length at least $18$ is a shuffle square.
\end{theorem}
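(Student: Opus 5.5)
The plan follows the strategy announced in the abstract: a parity-return atom decomposition for the infinite part, plus a finite certificate for short lengths. Let $w$ be a smooth tangram. Scan $w$ from the left and record the Parikh parity vector $(|u|_1 \bmod 2,\,|u|_2 \bmod 2)$ of each prefix $u$. Cut $w$ at every position where this vector returns to $(0,0)$. This writes $w$ uniquely as $w=a_1a_2\cdots a_k$, where each $a_i$ is a nonempty tangram with no proper nonempty tangram prefix; call these \emph{parity atoms}. Since the smooth language is factorial, each $a_i$ is itself smooth.

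The first key step is to show that every smooth parity atom has length at most $8$. An atom is a word whose Parikh parity first returns to $(0,0)$ at its end. Over $\{1,2\}$ such words take the shape $xx$ or $x\,y\cdots$, where the interior alternates parity in a constrained way; for instance, long atoms contain long factors of the form $(12)^r$ or $(21)^r$ with an odd excursion. Smooth words forbid $111$, $222$, long alternations such as $12121$ (its derivative contains $111$), and more generally anything whose iterated derivatives are non-binary. I would enumerate all smooth words up to length about $10$, which is feasible by hand or by a tiny computation via closure under derivatives, and check that no atom exceeds length $8$. For a conceptual argument, a long atom must avoid $11$ and $22$ except near its ends, because a square letter-pair inside restores parity structure early. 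Hence its interior is essentially alternating, and smoothness bounds alternation length by $4$.

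The second step is gluing. I would prove a concatenation lemma: if $u$ and $v$ are shuffle squares, then so is $uv$, by concatenating the two supports. So it suffices to group consecutive atoms into blocks, each of which is a smooth shuffle square. An atom like $11$, $22$, or $1221$ is a shuffle square by itself, and so is an atom $xyxy$. The atoms that fail, such as $1122$ (which is $1122$, a shuffle square only when interleaved) or the exceptional short words, must be absorbed with neighbours. Because atoms have length at most $8$, there are finitely many atom types. I would then show by finite check that any window of $c$ consecutive atoms (for some small constant $c$, total length at most $8c$) that is a smooth factor can be split into consecutive groups each forming a shuffle square, possibly using a more flexible interleaving that crosses atom boundaries. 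Formally, I would define a finite automaton on atom types and verify that every smooth atom sequence of total length $\ge 18$ admits a partition into shuffle-square groups. Lengths $18$ through $40$ would be handled by the explicit certificate. Beyond $40$, induction applies: strip a leading group of bounded length that is a shuffle square, leaving a smooth tangram suffix of length still $\ge 18$ (or $\ge 40-8c$, inside the certified range), and apply the inductive hypothesis.

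The main obstacle is the gluing step. Exceptional words up to length $16$ exist, so naive greedy grouping fails, and the induction must guarantee that the remainder never falls into the $34$ exceptions. I would try first to show that whenever the remainder would be short and exceptional, one can instead peel a longer prefix group. Failing that, I would peel from both ends, using the fact that the certificate covers a window of width $23$, which exceeds $2\cdot 8$ plus slack.
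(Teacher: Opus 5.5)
\textbf{There is a genuine gap.} Your overall architecture (atom bound, concatenation, certificate for $18$--$40$, induction) matches the paper's, but the gluing step never closes. You say so yourself. The ``finite automaton on atom types'' is never specified or checked, and your last paragraph leaves the worry that the remainder might be one of the $34$ exceptions with only contingency plans.

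The missing idea is a specific choice of cut, which makes any atom-level grouping analysis unnecessary. Use strong induction on $n$, with the certificate supplying the base $18,20,\ldots,40$. For $n\ge 42$, let $c$ be the first parity-atom boundary with $c\ge 18$. The previous boundary is at most $16$, boundaries are even, and atoms have length at most $8$, so $18\le c\le 24$. Write $w=uv$ with $|u|=c$. Then $u$ and $v$ are smooth tangrams, both strictly shorter than $w$, with $|u|\ge 18$ and $|v|\ge 42-24=18$. The induction hypothesis, which is the theorem itself rather than any per-atom property, makes both shuffle squares, and concatenation finishes the proof.

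Because every exception has length at most $16$, requiring both pieces to have length at least $18$ makes the exceptions irrelevant. This is also exactly why the certified range must reach $40$: $n-24\ge 18$ holds iff $n\ge 42$. Your sentence about stripping a leading group and leaving a suffix of length $\ge 18$ already has the right shape. What you lacked was taking the leading group to be this shortest atom-prefix of length $\ge 18$, which is automatically certified.

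\textbf{Several supporting claims are also wrong.}
\begin{itemize}
\item $1221$ is \emph{not} a shuffle square. It and $2112$ are the length-$4$ exceptions, and they are the only atoms that fail.
\item $1122$ is not an atom, since it has the proper tangram prefix $11$. It is also a shuffle square, with root $12$.
\item Your conceptual argument for the atom bound is backwards. In an atom beginning $12$ the parity state is $(1,1)$, so every later aligned pair before the last must be $11$ or $22$; a mixed aligned pair would return to $(0,0)$. The interior therefore consists of doubled letters, not alternations. Smoothness kills long atoms because alternating $11/22$ blocks produce a derivative containing $222$.
\item Your computational fallback works only if rephrased. Checking that no smooth word of length $10$ is an atom does not rule out atoms of length $12,14,\ldots$. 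You must instead check that no smooth word of length $10$ has a parity walk avoiding $(0,0)$ at positions $2,4,6,8$, and then invoke factoriality.
\end{itemize}
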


\begin{corollary}\label{cor:34}
There are exactly $34$ smooth binary tangrams that are not shuffle
squares.  All have length at most $16$.
\end{corollary}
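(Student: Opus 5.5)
Corollary~\ref{cor:34} splits into two parts: a uniform statement for long words, and a finite enumeration for short ones. By Theorem~\ref{thm:main}, every smooth binary tangram of length at least $18$ is a shuffle square. So every non-shuffle-square smooth tangram has length at most $17$, and since tangrams have even length, at most $16$. This gives the second sentence of the corollary and reduces the first to a finite computation.

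The plan for that computation is as follows. First, enumerate all smooth binary words of each even length $2m\le 16$. I would generate them via the run-length (differentiation) structure, or simply test every binary word of length $\le 16$ by iterating the run-length derivative. A word is kept when every iterate stays over $\{1,2\}$ (with the usual allowance for the boundary runs at the ends) until the empty word is reached. This is at most $2^{16}$ words, and in fact far fewer, since the number of smooth words grows only polynomially. Second, keep those in which both letters occur an even number of times. Third, decide for each remaining word whether it is a shuffle square. Shuffle-square recognition is NP-complete in general, but at length $\le 16$ a backtracking search suffices. The search scans $w$ left to right and assigns each letter to the first or second copy, maintaining the invariant that one copy's subsequence is a prefix of the other's; the state is then just the unmatched overhang word. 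Memoizing on (position, overhang) makes the search trivial at this size. Finally, count the failures and check that the total is exactly $34$.

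The main obstacle is not mathematical but one of reliability: making the smoothness test exactly match the paper's definition, in particular the treatment of the first and last runs, which may be truncated in a factor. An off-by-one convention here would change the count. I would cross-check in three ways: (i) confirm the enumeration against the known numbers of smooth words of each length, (ii) confirm that the lengths $4$ and $8$ Kolakoski prefixes appear among the exceptions, and (iii) confirm that no exception occurs at length $18$, as Theorem~\ref{thm:main} requires. As a final safeguard, I would list the $34$ words explicitly, together with a shuffle-square certificate (the pair of supports) for every other smooth tangram of length $\le 16$, so the count can be checked independently.
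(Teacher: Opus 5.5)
Your proposal is correct and follows the paper's proof. Theorem~\ref{thm:main}, together with the even length of tangrams, confines all exceptions to length at most $16$, and the count $34$ then comes from an exhaustive finite computation: the negatives column of Table~\ref{tab:counts}, with the words listed in the orbit table. The one difference is the decision procedure: your memoized overhang search is essentially the paper's FIFO queue made exact, whereas the paper certifies positives by FIFO controls and negatives by exhausting all support partitions, and either method gives the same result.
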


\begin{theorem}\label{thm:kolakoski}
Let $P_n$ be the nonempty prefix of length $n$ of the classical
Kolakoski sequence.  Then $P_n$ is a shuffle square if and only if both
of its letter multiplicities are even and $n\notin\{4,8\}$.  In
particular, infinitely many Kolakoski prefixes are shuffle squares.
\end{theorem}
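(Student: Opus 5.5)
The plan is to derive Theorem~\ref{thm:kolakoski} from Theorem~\ref{thm:main} and Corollary~\ref{cor:34}, together with a direct check of the finitely many short prefixes.

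\emph{Step 1: every prefix is a smooth word.} I will first show that every factor, and in particular every prefix $P_n$, of the Kolakoski sequence $\K$ is smooth. This is the standard fact motivating the definition. $\K$ is a fixed point of run-length encoding: the run lengths of $\K$ read off $\K$ itself. So the run-length derivative of a finite prefix $P_n$, after discarding a possibly incomplete final run (and, in the usual convention for smooth words, an incomplete first run), is again a prefix or factor of $\K$, and is strictly shorter. Induction on length then shows that iterated differentiation stays inside the factor language of $\K$, which is binary, until the empty word is reached. Hence $P_n$ is smooth. I will phrase this with exactly the derivative convention the paper uses. That is the only delicate point here: the boundary runs must be handled so that derivatives of prefixes remain factors of $\K$.

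\emph{Step 2: the equivalence, and the main obstacle.} Necessity of even multiplicities is immediate, since any shuffle square is a tangram. For sufficiency when $n\ge 18$, $P_n$ is a smooth tangram by Step~1, so Theorem~\ref{thm:main} applies. It remains to treat $n\le 16$. The tangram prefixes there have even length among $2,4,\dots,16$. I will list them explicitly from $\K=1221121221221121\cdots$ by computing the Parikh vectors of the prefixes of even length:
\begin{itemize}
\item $P_2=12$: counts $(1,1)$, not a tangram.
\item $P_4=1221$: counts $(2,2)$, a tangram.
\item $P_6$: counts $(3,3)$, not a tangram.
\item $P_8=12211212$: counts $(4,4)$, a tangram.
\item $P_{10}$: counts $(4,6)$, a tangram.
\item $P_{12}$: counts $(5,7)$, not a tangram.
\item $P_{14}$: counts $(6,8)$, a tangram.
\item $P_{16}$: counts $(8,8)$, a tangram.
\end{itemize}
For each tangram prefix I will either exhibit an explicit pair of identical interleaved supports, or prove that none exists. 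For $1221$, the root would have to be $12$ or $21$, and neither embeds twice disjointly: the two $1$s lie at the ends, so any split puts the final $1$ after both $2$s. For $P_8$ I will do a short case analysis on which copy takes the first letter, or simply cross-check against the list of $34$ exceptions in Corollary~\ref{cor:34}. For $P_{10}$, $P_{14}$, and $P_{16}$ I will give explicit partitions; the introduction already supplies one for $P_{16}$. This small finite verification is the main obstacle, and it is purely computational. The risk lies only in the boundary conventions of Step~1 and in carrying out the exceptional cases $n=4,8$ correctly.

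\emph{Step 3: infinitely many prefixes.} For the final claim I need infinitely many $n$ at which both counts of $P_n$ are even. I will argue by parity walking. As $n$ increases by one, exactly one count changes parity. Both counts are even at $n=16$, and the parity state returns to $(\text{even},\text{even})$ whenever the prefix extends by a factor with even counts of both letters. Suppose instead that from some point on $(\text{even},\text{even})$ were never revisited. Then the parity pattern would be confined to the other three states. From $(\text{odd},\text{even})$, avoiding $(\text{even},\text{even})$ forces every following letter to be $2$, so a stay in $(\text{odd},\text{odd})$ would force the letters to alternate deterministically, and $\K$ would be eventually periodic with period $12$ or $21$. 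Such a sequence cannot be the Kolakoski sequence: its run lengths are all $1$, contradicting self-reading, since runs of length $2$ occur infinitely often. Hence infinitely many tangram prefixes occur, and all those with $n\ge18$ are shuffle squares by Step~2.
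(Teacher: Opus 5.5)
Your architecture is the paper's: smoothness of prefixes, then Theorem~\ref{thm:main} for $n\ge18$, then a finite check below $18$, then a parity-walk argument for infinitude. Step~1 is fine. However, two concrete steps are wrong as written.

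In Step~2 you have miscomputed two Parikh vectors. The Parikh vectors of $P_{10}=1221121221$ and $P_{14}=12211212212211$ are $(5,5)$ and $(7,7)$, not $(4,6)$ and $(6,8)$. Your own entry $(5,7)$ for $P_{12}=P_{10}\,22$ already contradicts $(4,6)$. So neither prefix is a tangram, and the ``explicit partitions'' you promise for them cannot exist. The only tangram prefixes below length $18$ are $P_4$, $P_8$ and $P_{16}$. The finite check is that $P_4$ and $P_8$ fail and $P_{16}$ succeeds; note $P_8$ is the reverse complement of $12122112$ in Table~\ref{tab:exceptions}. This is exactly why the exceptional set is $\{4,8\}$.

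In Step~3 your analysis of the parity walk is incorrect. From $(1,0)$, avoiding $(0,0)$ forces only the \emph{next} letter to be $2$, which leads to $(1,1)$. From $(1,1)$ either letter is allowed, leading to $(0,1)$ or $(1,0)$. From there the same letter is forced again to return to $(1,1)$. Thus never revisiting $(0,0)$ forces $\K$ to be eventually a concatenation of aligned blocks $11$ and $22$, not eventually $(12)^\infty$ or $(21)^\infty$. Your contradiction (``all run lengths are $1$'') therefore addresses the wrong tail.

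The correct finish, as in the paper's aligned-pair argument, runs as follows. Since $\K$ has no run of length $3$, the blocks $11$ and $22$ must alternate, so every sufficiently late run has length exactly $2$. As $\K$ is its own run-length sequence, $\K$ would then be eventually constant $2$, contradicting the absence of $222$. With these two corrections your proof coincides with the paper's.
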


In independent work, Zwierzy\'nski~\cite{Zwierzynski} obtained the
smooth-word parity theorem, the same $34$ exceptions, and the Kolakoski
prefix characterization by a different method. His manuscript also
extends the results to classes of finitely differentiable words.
The proof presented here uses parity-return atoms.

The computer-verified component of the proof is a finite base; the
infinite step is established by the written atom argument and induction.  A canonical
$291{,}550$-byte certificate records every case, and a standalone
checker regenerates the complete smooth language through length $40$,
replays every positive witness, and exhausts every negative support
partition.

\section{Preliminaries}

\subsection{Finite differentiation}

Write a nonempty word over $\{1,2\}$ as a product of maximal runs
\[
 w=a_1^{r_1}a_2^{r_2}\cdots a_t^{r_t},
 \qquad a_i\ne a_{i+1}.
\]
It is \emph{differentiable} if $r_i\in\{1,2\}$ for every $i$.  Its
finite derivative $D(w)$ is the factor of $r_1\cdots r_t$ obtained by
deleting $r_1$ when $r_1=1$ and deleting $r_t$ when $r_t=1$; if these
deletions cross, the result is empty.  Put $D(\eps)=\eps$.  A word is
\emph{smooth} if repeated differentiation reaches $\eps$ and every
intermediate word is over $\{1,2\}$.  This is the standard finite
derivative on the alphabet $\{1,2\}$.

\begin{lemma}\label{lem:factor-derivative}
If $u$ is a factor of a differentiable word $w$, then $D(u)$ is a factor
of $D(w)$, possibly the empty factor.
\end{lemma}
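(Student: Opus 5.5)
The plan is to compare the run decomposition of $u$ with that of $w$. Since $w$ is differentiable, so is $u$: every run of $u$ lies inside a run of $w$ and has length at most $2$. Write $w=a_1^{r_1}\cdots a_t^{r_t}$ and let $u$ occupy positions $p\le q$ of $w$. If $u$ is empty, then $D(u)=\eps$ and there is nothing to prove, so assume $u$ is nonempty. Let $i$ be the index of the run of $w$ containing position $p$ and $j$ the index of the run containing position $q$. Then $u=a_i^{s_i}a_{i+1}^{r_{i+1}}\cdots a_{j-1}^{r_{j-1}}a_j^{s_j}$ when $i<j$, where $1\le s_i\le r_i$ and $1\le s_j\le r_j$; when $i=j$, $u=a_i^{s}$ with $1\le s\le r_i$. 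Every interior run of $u$ is a full run of $w$, and only the two extreme runs may be truncated.

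First I would handle the interior. The run-length word of $u$ is $s_i\,r_{i+1}\cdots r_{j-1}\,s_j$. Dropping a run length equal to $1$ at either end, or not dropping it, always leaves something containing the interior block $r_{i+1}\cdots r_{j-1}$. So the main step is to show that $D(u)$ is one of the following:
\begin{itemize}
\item the block $r_{i+1}\cdots r_{j-1}$, possibly extended by $s_i$ on the left and/or $s_j$ on the right;
\item the empty word.
\end{itemize}
I then check each possible extension. If $s_i$ survives in $D(u)$, it is either $2$, which forces $s_i=r_i=2$, or it is $1$ with $s_i$ not deleted, which cannot happen because a leading $1$ is always deleted. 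Hence any surviving $s_i$ equals $r_i=2$. Similarly, any surviving $s_j$ equals $r_j=2$. In either case the letter in $D(u)$ is exactly the corresponding letter $r_i$ or $r_j$ of the run-length word of $w$.

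It remains to show that $r_i\cdots r_j$, or the appropriate sub-block of it, is a factor of $D(w)$ and not of the full run-length word only. $D(w)$ is $r_1\cdots r_t$ with at most the first and last letter removed, and only when those letters equal $1$. If $D(u)$ uses $r_i$, then $r_i=2$, so it is never deleted, and the same holds for $r_j$. Interior letters $r_k$ with $i<k<j$ have $1<k<t$, so they are never deleted either. This gives the factor claim in the main case.

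The edge cases I expect to be the only fiddly part:
\begin{itemize}
\item $i=j$: here $D(u)$ is either $\eps$ or the single letter $2=r_i$, and the same argument applies.
\item $j=i+1$: the interior block is empty, and the same argument applies.
\item The deletions in $D(u)$ cross: then $D(u)=\eps$, which is trivially a factor.
\end{itemize}
I would also double-check the convention that the empty word counts as a factor, which the statement explicitly allows.
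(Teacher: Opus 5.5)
Your proposal is correct and follows essentially the same route as the paper. In both, interior runs of $u$ are complete runs of $w$, and a retained boundary run of $u$ must have length $2$, so it is a complete run of $w$. None of these entries can be deleted when forming $D(w)$, so the entries retained by $D(u)$ form a consecutive block of $D(w)$. Your version makes the index bookkeeping ($1\le i<k<j\le t$) and the edge cases explicit, where the paper's proof is terser.
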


\begin{proof}
Every interior run of $u$ is a complete run of $w$.  A boundary run of
$u$ is either a singleton, in which case the finite derivative deletes
it, or has length two, in which case it is a complete run of $w$.
Thus the entries retained by $D(u)$ are consecutive retained entries of
$D(w)$.  Deleting a singleton boundary run of $w$ cannot remove an
entry retained by $D(u)$.
\end{proof}

\begin{lemma}[Factoriality]\label{lem:factorial}
Every factor of a smooth word is smooth.
\end{lemma}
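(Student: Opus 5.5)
The plan is to argue by induction on the number of differentiation steps needed to reach $\eps$, or equivalently by strong induction on length. The tool is Lemma~\ref{lem:factor-derivative}, applied repeatedly down the chain of derivatives. Let $w$ be smooth and $u$ a factor of $w$. Then $w$ is differentiable, so every run of $w$ has length at most two. Runs of $u$ are sub-runs of runs of $w$, so $u$ is differentiable as well. Lemma~\ref{lem:factor-derivative} then says that $D(u)$ is a factor, possibly empty, of $D(w)$.

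The induction runs on the length $k$ of the differentiation chain $w, D(w), D^2(w), \dots, D^k(w)=\eps$, where every intermediate word lies over $\{1,2\}$. The statement to prove is: every factor of a word whose chain has length $k$ is smooth.

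\begin{itemize}
\item \emph{Base case $k=0$.} Here $w=\eps$, so its only factor is $\eps$, which is smooth.
\item \emph{Inductive step.} Let $u$ be a factor of $w$. As above, $u$ is differentiable and $D(u)$ is a factor of $D(w)$. The word $D(w)$ is smooth with a chain of length $k-1$, so by the inductive hypothesis $D(u)$ is smooth. Hence $u$ is a word over $\{1,2\}$ with all runs of length at most two, and its derivative is smooth. Therefore $u$ itself is smooth: its chain is $u$ followed by the chain of $D(u)$, every term is over $\{1,2\}$, and the chain terminates at $\eps$.
\end{itemize}

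One detail needs care. If $u$ is nonempty but $D(u)=\eps$, for instance when $u$ is a single letter, the chain for $u$ reaches $\eps$ after one step, which is consistent with the definition. It must also be checked that the derivative of a factor never produces a run length $\ge 3$. This follows from the differentiability argument above, since a factor's runs are no longer than the corresponding runs of $w$.

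The main point needing care is the claim that $D(u)$ is over $\{1,2\}$ and is exactly a factor of $D(w)$, rather than a word merely related to $D(w)$ in some weaker way. That is precisely the content of Lemma~\ref{lem:factor-derivative}, so once it is invoked the induction is routine. Beyond that, the proof requires only stating the induction cleanly, including the empty-factor case.
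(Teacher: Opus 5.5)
Your proof is correct and is essentially the paper's argument: you induct on derivative height, note that a factor of a word whose runs have length at most two is differentiable, and use Lemma~\ref{lem:factor-derivative} to place $D(u)$ as a factor of the smooth word $D(w)$. Your separate check that $D(u)$ has no long runs is not needed, since the inductive hypothesis already makes $D(u)$ smooth.
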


\begin{proof}
Induct on derivative height.  A factor $u$ of a smooth word $w$ is
differentiable because $w$ has no run of length greater than two.  By
Lemma~\ref{lem:factor-derivative}, $D(u)$ is a factor of the smooth word
$D(w)$, and is therefore smooth by induction.  Hence $u$ is smooth.
\end{proof}

\subsection{Two elementary shuffle facts}

\begin{lemma}[Concatenation]\label{lem:concat}
If $u$ and $v$ are shuffle squares, then $uv$ is a shuffle square.
\end{lemma}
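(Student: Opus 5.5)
The plan is to build the two supports for $uv$ directly from supports for $u$ and for $v$, shifting the latter by $|u|$. Write $|u|=2m$ and $|v|=2n$. Because $u$ is a shuffle square, there are increasing supports $i_1<\cdots<i_m$ and $j_1<\cdots<j_m$ partitioning $\{1,\dots,2m\}$ with $u_{i_t}=u_{j_t}$ for every $t$. Likewise $v$ has increasing supports $k_1<\cdots<k_n$ and $l_1<\cdots<l_n$ partitioning $\{1,\dots,2n\}$ with $v_{k_s}=v_{l_s}$.

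For $w=uv$ of length $2(m+n)$, I define
\[
 I=(i_1,\dots,i_m,\;2m+k_1,\dots,2m+k_n),\qquad
 J=(j_1,\dots,j_m,\;2m+l_1,\dots,2m+l_n).
\]
Three things then need checking.

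First, $I$ and $J$ are increasing. Each half of each sequence is increasing by hypothesis. Every index in the first half is at most $2m$, and every index in the second half is at least $2m+1$.

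Second, $I$ and $J$ partition $\{1,\dots,2m+2n\}$. The first halves partition $\{1,\dots,2m\}$, and the shifted second halves partition $\{2m+1,\dots,2m+2n\}$.

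Third, the letters match position by position. For $t\le m$ we have $w_{i_t}=u_{i_t}=u_{j_t}=w_{j_t}$. For the entry $m+s$ we have $w_{2m+k_s}=v_{k_s}=v_{l_s}=w_{2m+l_s}$.

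Hence $uv$ is an interleaving of two copies of the root $\rho(u)\rho(v)$, where $\rho(u)$ and $\rho(v)$ are the roots of $u$ and $v$. The empty word is trivially a shuffle square, so it causes no trouble. No step is a real obstacle; the only point needing care is the index shift by $|u|$, which keeps the two blocks from interleaving across the boundary.
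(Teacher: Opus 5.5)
Your proposal is correct and matches the paper's proof: concatenate the supports of $u$ with the supports of $v$ shifted by $|u|$, so both subsequences spell a root of $u$ followed by a root of $v$. Your explicit checks of monotonicity, the partition property, and letterwise agreement spell out what the paper states in two sentences.
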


\begin{proof}
Concatenate corresponding supports, shifting the supports in $v$ by
$|u|$.  Both resulting subsequences are the concatenation of a root of
$u$ and a root of $v$.
\end{proof}

For the finite certificate we use the following convenient sufficient
condition.  Scan a word from left to right with a FIFO queue.  A control
letter $U$ appends the current symbol; a control letter $D$ requires the
current symbol to equal the queue front and removes that front.

\begin{lemma}[FIFO certificate]\label{lem:fifo}
An empty-to-empty FIFO control certifies a shuffle square.
\end{lemma}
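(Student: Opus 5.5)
The plan is to read off the two supports directly from the control string and check that they spell the same subsequence. Let the control be $c_1\cdots c_n\in\{U,D\}^n$ for the word $w=w_1\cdots w_n$. Put $I=\{k: c_k=U\}$ and $J=\{k: c_k=D\}$, so $I$ and $J$ partition the positions. Since the run starts and ends with an empty queue, and each step either appends one entry or removes one, we get $|I|=|J|=m$ with $n=2m$. List the two sets in increasing order as $i_1<\cdots<i_m$ and $j_1<\cdots<j_m$. These are the proposed supports.

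The key step is the FIFO invariant: the $t$-th removal takes off exactly the entry appended at the $t$-th append. I would prove this by induction on the scan. At any moment the queue contains, from front to back, the symbols appended at $i_{s+1},\dots,i_r$, where $s$ removals and $r$ appends have happened so far. This holds because appends go on the back and removals come off the front.

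So the $t$-th $D$, which occurs at position $j_t$, sees the symbol $w_{i_t}$ at the front of the queue. The control rule requires $w_{j_t}$ to equal this front symbol, so $w_{i_t}=w_{j_t}$ for every $t$. By the definition of shuffle square in the introduction, this shows $w$ is a shuffle square with root $w_{i_1}\cdots w_{i_m}$.

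The only point needing a little care is that the control must be legal throughout: no $D$ is applied to an empty queue, which is part of what "certifies" means. Legality guarantees that the front entry at step $j_t$ really is the $t$-th appended entry, and in particular that $i_t<j_t$. That inequality is not actually needed for the conclusion, but it confirms the construction is well defined. I expect no real obstacle; the proof is a short bookkeeping argument once the invariant is stated.
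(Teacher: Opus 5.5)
Your proposal is correct and follows the paper's proof: the $U$ positions and $D$ positions are the two supports, and FIFO order matches the $t$-th removal with the $t$-th append, so the two induced subsequences coincide. You also spell out the queue invariant and the count $|I|=|J|$, which the paper leaves implicit in ``FIFO matching.''
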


\begin{proof}
The symbols at the $U$ positions form the queue-input stream and those
at the $D$ positions form the queue-output stream.  FIFO matching makes
these two subsequences identical.
\end{proof}

\section{Parity-return atoms}

For a word $u$, define
\[
 \pi(u)=(|u|_1,|u|_2)\pmod 2\in\mathbb F_2^2.
\]
If $w$ is a tangram, cut after each nonempty prefix $u$ for which
$\pi(u)=0$.  The resulting factors are its \emph{parity atoms}.

\begin{lemma}\label{lem:unique}
Every tangram has a unique factorization into nonempty tangrams having
no proper nonempty tangram prefix.
\end{lemma}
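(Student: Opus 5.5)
We prove existence and uniqueness together by viewing the factorization in terms of the cut set it induces. The prefix parity map $k\mapsto \pi(w_1\cdots w_k)$ takes values in the group $\mathbb F_2^2$. Since $\pi$ is additive under concatenation, $\pi(uv)=\pi(u)+\pi(v)$ for all words $u,v$.

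The first step records the basic correspondence. Suppose $w=x_1x_2\cdots x_s$ is any factorization into nonempty tangrams. Then every partial product $x_1\cdots x_j$ is a tangram prefix of $w$. Conversely, suppose $u$ is a tangram prefix of $w$ and $uv=w$. Then $\pi(v)=\pi(w)-\pi(u)=0$, so $v$ is also a tangram. Consequently, any strictly increasing set of cut positions $0=c_0<c_1<\cdots<c_s=|w|$, each of which is a tangram prefix length, yields a factorization of $w$ into nonempty tangrams. Indeed, each factor $w_{c_{j-1}+1}\cdots w_{c_j}$ has parity $\pi(\text{prefix}_{c_j})-\pi(\text{prefix}_{c_{j-1}})=0$.

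The second step is existence. Let $C=\{k\ge 1:\pi(w_1\cdots w_k)=0\}$, which contains $|w|$, and cut at every element of $C$. These are exactly the parity atoms defined above. Suppose some atom $x=w_{c+1}\cdots w_{c'}$, with $c,c'$ consecutive in $C\cup\{0\}$, had a proper nonempty tangram prefix of length $\ell$, where $0<\ell<c'-c$. Then $\pi(w_1\cdots w_{c+\ell})=\pi(w_1\cdots w_c)+0=0$. This puts $c+\ell\in C$ strictly between $c$ and $c'$, which is a contradiction. Hence every atom has no proper nonempty tangram prefix.

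The third step is uniqueness. Take any factorization $w=x_1\cdots x_s$ into nonempty tangrams with no proper nonempty tangram prefix, and let its cut set be $c_1<\cdots<c_s$. By the first step, each $c_j$ lies in $C$. It remains to show that every $k\in C$ is some $c_j$. If $k\in C$ lay strictly between $c_{j-1}$ and $c_j$, then $w_{c_{j-1}+1}\cdots w_k$ would be a proper nonempty prefix of $x_j$. Its parity is $\pi(\text{prefix}_k)-\pi(\text{prefix}_{c_{j-1}})=0$, so it is a tangram, contradicting the hypothesis on $x_j$. Hence the cut set equals $C$, and the factorization coincides with the parity-atom factorization.

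There is no real obstacle here; the only point needing care is the additivity identity $\pi(uv)=\pi(u)+\pi(v)$ in $\mathbb F_2^2$, which makes "tangram prefix" equivalent to "parity-zero prefix position" and also lets tangram prefixes be cancelled. The empty tangram has the empty factorization, so we may assume $w$ is nonempty throughout.
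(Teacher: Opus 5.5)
Your proposal is correct and takes the same approach as the paper. The paper's proof is the one-line observation that the cut positions are forced to be exactly the returns of the prefix parity walk to $0$; you spell out what it leaves implicit: additivity of $\pi$, that cutting at every return produces factors with no proper nonempty tangram prefix, and that any admissible factorization must cut at exactly those returns.
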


\begin{proof}
The cut positions are forced: they are exactly the returns of the prefix
parity walk to $0$.
\end{proof}

\begin{lemma}\label{lem:atoms}
The smooth parity atoms are exactly
\[
\begin{split}
\A=\{&
11,22,1212,1221,2112,2121,\\
&121121,212212,12112212,21221121\}.
\end{split}
\]
In particular, every smooth parity atom has length at most $8$.
\end{lemma}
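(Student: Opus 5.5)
Since smooth words have no runs of length three, the parity walk is easy to analyse. For a parity atom $w$, the prefix-parity walk starts at $0$, leaves $0$ at the first step, and does not return until the end. A letter $1$ toggles the first coordinate and a letter $2$ toggles the second. The plan is to show that such a walk cannot stay away from $0$ for long in a smooth word, and to do this by a finite enumeration organized by the run structure.

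\textbf{Step 1: the first two letters.} By the symmetry $1\leftrightarrow 2$, which preserves smoothness (it commutes with $D$ up to the same run-length sequence), I may assume $w$ begins with $1$. If the second letter is $1$, the walk returns at once and $w=11$; symmetrically this gives $22$. Otherwise $w$ begins $12$, and the walk sits at $(1,1)$.

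\textbf{Step 2: tracking the walk through runs.} Write $w$ as alternating runs of lengths $r_i\in\{1,2\}$. A run of length $2$ does not change the parity state. A run of length $1$ toggles one coordinate, and the coordinates are toggled alternately by the alternating letters. Starting at $(1,1)$ after the prefix $12$, the walk returns to $0$ exactly when two singleton runs have occurred whose letters each cancel the parity left open so far. Concretely, encode the tail after the first two runs as a sequence over $\{1,2\}$ that is essentially $D(w)$. The return time is then governed by the positions of $1$'s in $D(w)$, with parity interacting with which letter each singleton run carries. The resulting condition is that the atom ends as soon as the run-length word contains a suitable pair of singletons, so long atoms force long stretches of $2$'s, or of isolated $1$'s, in $D(w)$.

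\textbf{Step 3: bounding the length by smoothness.} Since $D(w)$ must itself be smooth, it contains no $111$ and no $222$. Its derivative likewise has no runs of length $3$. One more level excludes patterns such as $2112112$ with run-length $2$s all around. I will use these restrictions to show that once the atom has length more than $8$, the required avoidance of parity return forces a factor of some iterated derivative with a run of length $3$, contradicting smoothness via Lemma~\ref{lem:factorial} and Lemma~\ref{lem:factor-derivative}.

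\textbf{Main obstacle.} Step 3 is where I expect the work to lie. In practice the cleanest route is a finite check: enumerate all smooth words of length $9$ and $10$ (only a few dozen) and verify that none has a prefix-parity walk avoiding $0$ at positions $1$ through $8$ or $9$. Since any longer atom has a length-$9$ or length-$10$ prefix of this kind, and that prefix is smooth by Lemma~\ref{lem:factorial}, this gives the bound of $8$. The same enumeration of smooth words of length at most $8$ then lists exactly the atoms in $\A$. It also confirms that each listed word is smooth, by differentiating: for example $D(12112212)$ gives $1221$ after trimming, then $D(1221)=2$ after trimming the boundary singletons, then $D(2)=\eps$.
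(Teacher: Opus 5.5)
Your proof is correct, but it is carried by the finite enumeration in your last paragraph, not by Steps~2--3, and that is a different route from the paper's. The reduction is sound. An atom of length at least $9$ has a length-$9$ prefix, smooth by Lemma~\ref{lem:factorial}, whose walk avoids $0$ at positions $1,\dots,8$. No smooth word of length $9$ has this property, so atoms have length at most $8$, and enumerating smooth words through length $8$ yields $\A$.

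The paper needs no enumeration, because of one observation your Step~2 is missing. The walk has exactly one nonzero coordinate at odd positions and lies in $\{(0,0),(1,1)\}$ at even positions. So an atom beginning with $12$ has the form $12(11\mid22)^*(12\mid21)$. Excluding $111$ and $222$ leaves $1212$, $1221$, $A_k=1211(2211)^k21$ and $B_k=1211(2211)^k2212$. A single differentiation, $D(A_k)=1\,2^{2k+1}1$ and $D(B_k)=1\,2^{2k+2}1$, rules out $k\ge1$.

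That argument is fully human-checkable and shows the obstruction already appears in the first derivative, so the ``one more level'' you anticipated is never needed. Your route is mechanical and makes the lemma depend on correctly generating the smooth language through length $9$ or $10$. With the aligned-pair observation, your check collapses to one word: the only words without $111$ or $222$ whose walk avoids $0$ at positions $1,\dots,8$ are $12112211$ and its complement, and $D(12112211)=1222$.

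Steps~2--3 should simply be dropped. Singleton runs do not toggle the coordinates alternately: both singleton runs of $1221$ carry the letter $1$. Also, the prefix $12$ need not consist of whole runs: in $1221$ it splits the run $22$. So the run-by-run bookkeeping there is not set up correctly.
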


\begin{proof}
Consider an atom beginning with $1$.  If its second letter is $1$, the
atom is $11$.  Otherwise it begins with $12$, after which its parity
state is $(1,1)$.  Before the terminal return to $(0,0)$, every
subsequent aligned two-letter block is $11$ or $22$; the terminal block is $12$ or
$21$.  Hence every atom beginning with $1$ has the form
\[
 11\quad\text{or}\quad 12(11\mid22)^*(12\mid21).
\]
Atoms beginning with $2$ are their complements.

A differentiable word contains neither $111$ nor $222$.  In the
displayed regular form, the internal double blocks must therefore
alternate, the first one must be $11$, and the last one is determined
by the terminal segment.  Besides $1212$ and $1221$, the possibilities
are
\[
 A_k=1211(2211)^k21,\qquad
 B_k=1211(2211)^k2212,\qquad k\ge0,
\]
and their complements.  Direct run-length calculation gives
\[
 D(A_k)=1\,2^{2k+1}1,\qquad
 D(B_k)=1\,2^{2k+2}1.
\]
For $k\ge1$, the derivative contains $222$, so the atom is not twice
differentiable and hence is not smooth.  At $k=0$, the two remaining
atoms are $121121$ and $12112212$, together with their complements.
This gives exactly $\A$.
\end{proof}

\begin{remark}
The same ten-word set appears as Rao's $W_2$ smooth bifix
code~\cite{Rao}.  Lemma~\ref{lem:atoms} gives the direct parity-return
interpretation needed here.
\end{remark}

\section{Proof of the smooth-word theorem}

\begin{proposition}[Finite base]\label{prop:base}
Every smooth tangram of length
\[
 18,20,22,\ldots,40
\]
is a shuffle square.
\end{proposition}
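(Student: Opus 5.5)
This is a finite statement, and the paper already signals that it is settled by a computer-checkable certificate. I would prove Proposition~\ref{prop:base} in three steps: enumerate the relevant words, reduce to a manageable list of cases, and certify each remaining case.

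\textbf{Enumeration.} First I enumerate the smooth words of each length up to $40$. Lemma~\ref{lem:factorial} says the smooth language is factorial, so it can be built by extension. Start from the smooth words of length $n$ and append $1$ or $2$. Keep a candidate $w$ only if iterating $D$ produces words over $\{1,2\}$ all the way down to $\eps$. Factoriality guarantees that no smooth word is lost by pruning non-smooth prefixes. The number of smooth words of length $n$ grows only polynomially (roughly like $n^{2.7}$ by the known counts), so the full language through length $40$ is small. From this list I keep the tangrams of even length $18\le n\le 40$.

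\textbf{Reduction via atoms.} Before any search, I factor each tangram into parity atoms (Lemma~\ref{lem:unique}). Each atom lies in $\A$ by Lemma~\ref{lem:atoms}.
\begin{itemize}
\item Whenever the word splits as $uv$ with both $u$ and $v$ nonempty tangrams that are shuffle squares, Lemma~\ref{lem:concat} finishes that word. The shorter pieces are already covered by the table of small shuffle squares.
\item The atoms $11$ and $22$ are themselves shuffle squares.
\item The only atoms of length at most $8$ that are not shuffle squares are $1221$, $121121$, $12112212$ and their complements.
\end{itemize}
After this reduction, the residual cases are words whose every cut into two tangram blocks leaves a non-square piece.

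\textbf{Certificates for the residual cases.} For each residual word I search for an explicit witness, either a pair of supports or, more compactly, a FIFO control string in $\{U,D\}^n$. Lemma~\ref{lem:fifo} makes any such control string a certificate. The search is a depth-first scan that keeps the queue contents, and I would memoize on (position, queue). A FIFO certificate is not a necessary condition for being a shuffle square. So if the FIFO search fails on some word, I fall back to a general support search: a backtracking matcher that assigns each position to the first or second copy and keeps the unmatched surplus as a queue on one side, which is complete for shuffle squares.

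The main obstacle is making this enumeration and witness search exhaustive and independently checkable rather than clever. Concretely, I need a checker that regenerates the smooth language without trusting the generator and replays every witness in linear time. The reason this check matters is that no general argument shows residual words of length $\ge18$ all admit witnesses; that fact is exactly what the computation establishes. I would also record, as a consistency check, the $34$ non-square smooth tangrams of length $\le16$. The certificate at length $\ge18$ should show no failures, and this finite base then feeds the atom induction for longer words.
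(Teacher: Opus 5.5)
Your plan is essentially the paper's proof. You regenerate the whole smooth language through length $40$ independently and exhibit an empty-to-empty FIFO control (Lemma~\ref{lem:fifo}) for every smooth tangram of length $18$ to $40$, replayed by a standalone checker. The differences are cosmetic. You generate words by right-extension using factoriality, where the paper uses primitive expansion from derivative parents; both are complete. You also add an optional pre-reduction via Lemma~\ref{lem:concat}, whereas the paper simply stores a control for every positive word.

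Two of your side claims are false. Neither affects soundness, since each is used only conservatively.

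First, $121121=(121)(121)$ is a shuffle square. So is $12112212$, with supports $\{1,2,3,5\}$ and $\{4,6,7,8\}$ and root $1212$. Hence $1221$ and $2112$ are the only atoms that are not shuffle squares.

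Second, FIFO controls are complete for shuffle squares, not merely sufficient. Take any supports $(i_t)$ and $(j_t)$. The sequences $\min(i_t,j_t)$ and $\max(i_t,j_t)$ are both strictly increasing, pair equal letters, and satisfy $\min<\max$. This gives an empty-to-empty control. So your fallback support search is never needed; in fact, your ``surplus queue'' matcher is the same search.
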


\begin{proof}[Computer-assisted verification]
The certificate described in Section~\ref{sec:verification} contains an
empty-to-empty FIFO control for every such word.  The standalone checker
independently regenerates the complete set of smooth words at these
lengths and replays each control.
\end{proof}

\begin{proof}[Proof of Theorem~\ref{thm:main}]
Use strong induction on the even length $n$.  Proposition~\ref{prop:base}
supplies the base $18,20,\ldots,40$.

Let $n\ge42$, and factor a smooth tangram $w$ into parity atoms.  Each
atom is a factor of $w$, hence is smooth by
Lemma~\ref{lem:factorial}, and therefore has length at most $8$ by
Lemma~\ref{lem:atoms}.

Let $c$ be the first atom boundary with $c\ge18$.  The preceding
boundary is at most $16$, while the next atom has length at most $8$.
All atom boundaries are even, so
\[
 18\le c\le24.
\]
Write $w=uv$ with $|u|=c$.  Both $u$ and $v$ are smooth tangrams, and
\[
 |u|\ge18,\qquad |v|=n-c\ge42-24=18.
\]
They are both strictly shorter than $w$.  The induction hypothesis
makes them shuffle squares, and Lemma~\ref{lem:concat} makes $w=uv$ a
shuffle square.
\end{proof}

\section{Exact finite verification}\label{sec:verification}

Smooth words were regenerated by complete primitive expansion.  For a
fixed nonempty derivative parent, every primitive is obtained by
choosing the first run symbol and every allowed optional singleton
boundary run.  The height-one roots are
\[
 1,\quad2,\quad12,\quad21.
\]
Since the derivative strictly contracts nonempty words, induction on
height proves completeness of this generation.

\begin{longtable}{rrrr}
\caption{Exact finite counts.  The last column counts non-shuffle
tangrams.}\label{tab:counts}\\
\toprule
$n$ & smooth words & smooth tangrams & negatives\\
\midrule
\endfirsthead
\toprule
$n$ & smooth words & smooth tangrams & negatives\\
\midrule
\endhead
2&4&2&0\\
4&10&6&2\\
6&18&6&0\\
8&34&18&6\\
10&50&22&4\\
12&78&42&10\\
14&110&54&4\\
16&142&74&8\\
18&186&66&0\\
20&250&150&0\\
22&314&146&0\\
24&378&222&0\\
26&446&206&0\\
28&534&274&0\\
30&654&282&0\\
32&782&414&0\\
34&910&466&0\\
36&1038&542&0\\
38&1166&538&0\\
40&1302&674&0\\
\bottomrule
\end{longtable}

The canonical certificate contains one FIFO control for each of the
$4{,}170$ positive smooth tangrams through length $40$.  For every
claimed negative, the checker directly exhausts all equal-size support
partitions.  The separately implemented standalone checker imports no discovery code; it
regenerates $16{,}146$ smooth words, covers all $4{,}204$ smooth
tangrams, replays all positive controls, and exhausts all $34$
negatives.

The ancillary files accompanying this preprint are
\begin{center}
\begin{tabular}{ll}
\toprule
File & Purpose\\
\midrule
\texttt{smooth-even-base-certificate.json}
 & canonical controls and exception set\\
\texttt{verify\_smooth\_even\_certificate.py}
 & standalone checker\\
\texttt{smooth-even-exceptions.md}
 & human-readable exception table\\
\bottomrule
\end{tabular}
\end{center}
The accompanying \texttt{README.md} gives a direct command for Python
3.10 or later; the checker uses only the standard library.
The canonical certificate has size $291{,}550$ bytes and SHA-256
\begin{center}
\small\ttfamily
d04eb188c2529a67ac5ee7161fe83940fe316b3ac54f7d42fbdd59be7936b42f.
\end{center}

For completeness, the $34$ exceptions are specified by ten orbits.
Let $G$ be the four-element group generated by reversal and binary
complementation.  The exception set is the union of the $G$-orbits in
Table~\ref{tab:exceptions}.

\begin{table}[ht]
\centering
\small
\caption{Orbit representatives for all non-shuffle smooth tangrams.}
\label{tab:exceptions}
\begin{tabular}{lrr}
\toprule
Representative & Orbit size & Length\\
\midrule
\texttt{1221} &2&4\\
\texttt{12122112} &4&8\\
\texttt{12212112} &2&8\\
\texttt{1211211221} &4&10\\
\texttt{112122122112} &4&12\\
\texttt{121122122112} &4&12\\
\texttt{122121121221} &2&12\\
\texttt{11221211211221} &4&14\\
\texttt{1212212211211221} &4&16\\
\texttt{1221121121221221} &4&16\\
\bottomrule
\end{tabular}
\end{table}

The orbit sizes sum to $34$.  All orbit members were checked directly,
not inferred only by symmetry.  Table~\ref{tab:counts},
Theorem~\ref{thm:main}, and the explicit orbit table prove
Corollary~\ref{cor:34}.

\section{Kolakoski prefixes}

Let
\[
 \K=122112122122112112212\cdots
\]
be the classical Kolakoski sequence, whose run-length word is $\K$
itself, and let $P_n=K_1\cdots K_n$.  Every finite prefix is smooth:
the run-length derivative of a factor is a factor of the infinite
derivative after boundary trimming, and every infinite derivative of
$\K$ is again $\K$.

Set
\[
 S(n)=\sum_{i=1}^nK_i.
\]
If $a=|P_n|_1$ and $b=|P_n|_2$, then
\[
 n=a+b,\qquad S(n)=a+2b.
\]
Therefore $P_n$ is a tangram if and only if $n$ and $S(n)$ are both
even.

\begin{lemma}\label{lem:infinite-parity}
Infinitely many Kolakoski prefixes are tangrams.
\end{lemma}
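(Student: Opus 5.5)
The plan is to argue by contradiction using the prefix parity walk, much as in the proof of Lemma~\ref{lem:atoms}. Define $p_n=\pi(P_n)\in\mathbb F_2^2$. Then $P_n$ is a tangram exactly when $p_n=0$. Each new letter $K_{n+1}$ adds $e_1=(1,0)$ if it is $1$ and $e_2=(0,1)$ if it is $2$. Suppose only finitely many prefixes are tangrams. Then there is an $N$ with $p_n\neq0$ for all $n\ge N$, and I will show this forces a periodic tail incompatible with the self-similarity of $\K$.

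The first step is to analyse a walk on $\mathbb F_2^2\setminus\{0\}$ that never hits $0$. From state $(1,0)$ the letter $1$ would give $0$, so the next letter must be $2$, and the walk moves to $(1,1)$. Symmetrically, from $(0,1)$ the next letter must be $1$, again leading to $(1,1)$. From $(1,1)$ either letter is allowed: $2$ leads to $(1,0)$ and $1$ leads to $(0,1)$. Note that $(1,1)$ is visited at least every second step. Fix a time $n_0\ge N$ with $p_{n_0}=(1,1)$. Then every subsequent aligned two-letter block $K_{n_0+2j+1}K_{n_0+2j+2}$ is $11$ or $22$, because the first letter of the block determines the forced second letter.

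The second step uses the smoothness of $\K$, which the paper has already noted. Since $\K$ contains neither $111$ nor $222$, consecutive aligned blocks must alternate between $11$ and $22$. So the tail of $\K$ after position $n_0$ is $(1122)^\omega$ or $(2211)^\omega$. In either case every run of $\K$ beyond some point has length exactly $2$. Since $\K$ is its own run-length sequence, $K_i=2$ for all sufficiently large $i$. That gives arbitrarily long runs of $2$, contradicting the absence of $222$ (equivalently, contradicting the tail being $(1122)^\omega$). Hence $p_n=0$ for infinitely many $n$, which is the claim of Lemma~\ref{lem:infinite-parity}.

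The only delicate point I expect is the passage from ``eventually all runs have length $2$'' to a contradiction. This needs precisely the defining identity that the run-length word of $\K$ is $\K$, applied to a tail: the runs from some index on are the entries $K_i$ from some index on. I would phrase this carefully by noting that the run starting at position $n_0+1$, or the next one if $n_0$ splits a run, is the $m$th run for some $m$. Then $K_i=2$ for all $i\ge m$, whereas the tail $(1122)^\omega$ contains the letter $1$ infinitely often. That gives the contradiction.
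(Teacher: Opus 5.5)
Your proof is correct and follows essentially the same route as the paper. The paper splits into the cases of infinitely or finitely many mixed aligned pairs $Q_j=(K_{2j-1},K_{2j})$, whereas you obtain the forced homogeneous blocks by contradiction from a parity walk that avoids $0$ after a $(1,1)$-time $n_0$; the decisive step is then identical: late homogeneous blocks must alternate $11,22$, so all late runs have length $2$, and since $\K$ is its own run-length word it would be eventually constant $2$, which is impossible.
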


\begin{proof}
Group $\K$ into aligned pairs
\[
 Q_j=(K_{2j-1},K_{2j}).
\]
The parity of $S(2m)$ toggles at pair $Q_j$ exactly when $Q_j$ is mixed,
that is, $12$ or $21$.  If there are infinitely many mixed pairs, the
parity returns to zero after every second one, giving infinitely many
tangram prefixes.

Suppose instead that only finitely many aligned pairs are mixed.  Every
sufficiently late aligned pair is then $11$ or $22$.  Since every run of
$\K$ has length at most two, consecutive late homogeneous pairs must
alternate, and each such pair is one complete run.  Hence all
sufficiently late run lengths equal $2$.  The run-length word, which is
$\K$, would be eventually all $2$, contradicting the fact that every run
of $\K$ has length at most two.  Thus mixed aligned pairs are infinite.
\end{proof}

\begin{proof}[Proof of Theorem~\ref{thm:kolakoski}]
The tangram condition is necessary for every shuffle square.  If
$n\ge18$, it is sufficient by Theorem~\ref{thm:main}, since $P_n$ is
smooth.  The only tangram prefixes below length $18$ are
\[
\begin{array}{ccl}
P_4&=&1221,\\
P_8&=&12211212,\\
P_{16}&=&1221121221221121.
\end{array}
\]
Direct exhaustive checks show that the first two are not shuffle
squares and the last is one.  This proves the exact characterization.
Lemma~\ref{lem:infinite-parity} supplies infinitely many tangram
prefixes, all but finitely many of length at least $18$, proving the
final assertion.
\end{proof}

Call an infinite word a \emph{uniform shuffle square} if its positions
can be colored $U$ and $D$ so that the two induced infinite
subsequences are identical and the lengths of the maximal
monochromatic intervals are uniformly bounded.

\begin{corollary}\label{cor:uniform}
The infinite Kolakoski sequence is a uniform shuffle square.  It admits
such a coloring in which every monochromatic interval has length at
most $48$.
\end{corollary}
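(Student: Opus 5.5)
The plan is to cut $\K$ into consecutive finite blocks, each of which is a smooth tangram of length between $18$ and $40$, so each is a shuffle square with a FIFO certificate. The concatenated colorings then give the infinite coloring. The parity atoms do the cutting. By Lemma~\ref{lem:infinite-parity}, infinitely many prefixes of $\K$ are tangrams, so $\K$ factors uniquely as an infinite product of parity atoms. Each atom is a factor of $\K$, hence smooth by Lemma~\ref{lem:factorial}, and therefore lies in $\A$ and has length at most $8$ by Lemma~\ref{lem:atoms}.

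\textbf{Step 1: cut into blocks.} Cut greedily. Starting at a boundary $b$, let the next cut be the first atom boundary $c$ with $c-b\ge 18$. Exactly as in the proof of Theorem~\ref{thm:main}, this forces $18\le c-b\le 24$. Each block is a smooth tangram of length $18$ to $24$, so it is a shuffle square. In fact Proposition~\ref{prop:base} supplies an empty-to-empty FIFO control for it.

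\textbf{Step 2: glue the controls.} Each block's control has both streams equal to the same root. The queue empties at the end of every block, so concatenating the controls keeps $U$-stream $=$ $D$-stream globally. Thus the infinite subsequences coincide, which is the infinite version of Lemma~\ref{lem:concat}. The $U$-stream is infinite because every block contributes half its letters to it.

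\textbf{Step 3: bound the monochromatic runs.} Within one block, a monochromatic interval has length at most the block length, $24$. A maximal interval can, however, straddle a cut, ending one block and beginning the next, so its length is at most $24+24=48$. That is exactly the stated bound.

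The main obstacle is being careful that intervals cannot extend across three blocks. This would require some block to be entirely one color, which is impossible: an empty-to-empty control has equally many $U$s and $D$s, and the block is nonempty. That closes the argument, and the bound $48$ needs no further optimization of the controls.
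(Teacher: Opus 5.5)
Your proposal is correct and follows essentially the same route as the paper. You greedily group the parity atoms of $\K$ into smooth tangram blocks of length $18$ to $24$, concatenate their shuffle-square colorings so both induced subsequences equal $r_1r_2r_3\cdots$, and use the fact that every block contains both colors to conclude that a maximal monochromatic interval meets at most two blocks, giving the bound $48$. The only cosmetic difference is that you take the colorings directly from the FIFO controls of Proposition~\ref{prop:base} instead of citing Theorem~\ref{thm:main}, which for these lengths amounts to the same thing.
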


\begin{proof}
Include the empty prefix as a tangram boundary.  By
Lemma~\ref{lem:infinite-parity}, there are infinitely many further
tangram boundaries.  The factors between consecutive boundaries are
parity atoms.  They are smooth by factoriality, so
Lemma~\ref{lem:atoms} shows that each has even length at most $8$.

Starting from the beginning of $\K$, greedily concatenate consecutive
atoms until their total length is at least $18$.  Immediately before
the final atom the accumulated length is even and at most $16$, so each
resulting block has length between $18$ and $24$.  Every block is a
smooth tangram and hence, by Theorem~\ref{thm:main}, a shuffle square.

For every block choose a shuffle-square coloring and let $r_i$ be its
finite root.  Concatenating these colorings makes both induced infinite
subsequences equal to
\[
 r_1r_2r_3\cdots.
\]
Each block contains both colors.  A maximal monochromatic interval can
therefore meet at most two consecutive blocks and has length at most
$24+24=48$.  This proves uniformity.  The choices of the finite roots,
and hence the resulting infinite root, need not be unique.
\end{proof}

\section{Concluding remarks}

The proof isolates a general mechanism.  Suppose a factorial language
has uniformly bounded parity atoms, and shuffle-squareness is verified
on the finite interval needed to split both sides above a threshold.
Closure under concatenation then gives an eventual theorem.  For binary
smooth words the atom bound is $8$, the threshold is $18$, and the
entire obstruction set has only $34$ words.

It would be natural to ask whether analogous finite-obstruction results
hold for generalized smooth languages, for larger alphabets, or for
shuffle powers of order greater than two.

\section*{Acknowledgments}

The author thanks Jaros\l{}aw Grytczuk for correspondence, additional
references, and the observation that the argument yields a uniform
infinite shuffle square, formalized in Corollary~\ref{cor:uniform}.
The author also thanks Micha\l{} Zwierzy\'nski for sharing his independent
work and for discussions of the relationship between the two approaches.


\begin{thebibliography}{99}

\bibitem{BBC}
V.~Berth\'e, S.~Brlek, and P.~Choquette,
\newblock Smooth words over arbitrary alphabets,
\newblock \emph{Theoret. Comput. Sci.} 341 (2005), 293--310,
\newblock \url{https://doi.org/10.1016/j.tcs.2005.04.010}.


\bibitem{Bulteau}
L.~Bulteau and S.~Vialette,
\newblock Recognizing binary shuffle squares is NP-hard,
\newblock \emph{Theoret. Comput. Sci.} 806 (2020), 116--132.

\bibitem{Carpi}
A.~Carpi,
\newblock On repeated factors in $C^\infty$-words,
\newblock \emph{Inform. Process. Lett.} 52 (1994), 289--294.

\bibitem{CassaigneHenry}
J.~Cassaigne and R.~Henry,
\newblock The complexity of smooth words over binary alphabets,
\newblock arXiv:2603.10733, 2026.

\bibitem{DatkoPawlik}
D.~Datko and B.~Pawlik,
\newblock Roots of binary shuffle squares,
\newblock \emph{Symmetry} 17 (2025), Article 305.

\bibitem{DellaCorte}
A.~Della Corte,
\newblock Kolakoski sequence: links between recurrence, symmetry and
limit density,
\newblock \emph{Open J. Discret. Appl. Math.} 4 (2021), 29--44.

\bibitem{FedouFici}
J.-M.~Fédou and G.~Fici,
\newblock Automata and differentiable words,
\newblock \emph{Theoret. Comput. Sci.} 443 (2012), 46--62.

\bibitem{GPR}
J.~Grytczuk, B.~Pawlik, and A.~Ruci\'{n}ski,
\newblock Shuffle squares and ordered nest-free graphs,
\newblock arXiv:2503.22043v2, 2025.

\bibitem{HHNT}
X.~He, E.~Huang, I.~Nam, and R.~Thaper,
\newblock Shuffle squares and reverse shuffle squares,
\newblock \emph{European J. Combin.} 116 (2024), Article 103883,
\newblock \url{https://doi.org/10.1016/j.ejc.2023.103883}.

\bibitem{HePost}
X.~He and L.~Post,
\newblock Asymptotically half of binary words are shuffle squares,
\newblock arXiv:2512.12077, 2025,
\newblock \url{https://arxiv.org/abs/2512.12077}.

\bibitem{HRS}
D.~Henshall, N.~Rampersad, and J.~Shallit,
\newblock Shuffling and unshuffling,
\newblock \emph{Bull. Eur. Assoc. Theor. Comput. Sci.} 107 (2012),
131--142.

\bibitem{Rao}
M.~Rao,
\newblock Bifix codes and smooth words,
\newblock slides, Workshop on Challenges in Combinatorics on Words,
Fields Institute, April 2013,
\newblock \url{https://cs.uwaterloo.ca/~shallit/Fields/rao2.pdf}.

\bibitem{Weakley}
W.~D.~Weakley,
\newblock On the number of $C^\infty$-words of each length,
\newblock \emph{J. Combin. Theory Ser. A} 51 (1989), 55--62.

\bibitem{Zwierzynski}
M.~Zwierzy\'nski,
\newblock Shuffle squares in differentiable words,
\newblock unpublished manuscript, version of 27 September 2026,
privately communicated.

\end{thebibliography}
\end{document}